\def\RR{{\mathbb R}}
\def\SS{{\mathbb S}}
\def\l{\lambda}
\let\temp\phi
\let\phi\varphi
\let\varphi\temp
\def\ones{\mathbbm{1}}
\def\zeros{\mathbf{0} }
\def\supp{\operatorname {supp}}
\def\spanset{\operatorname {span}}
\def\rank{\operatorname {rank}}
\theoremstyle{definition}
\newtheorem{definition}{Definition}[section]
\theoremstyle{plain}
\newtheorem{theorem}[definition]{Theorem}
\newtheorem{lemma}[definition]{Lemma}
\newtheorem{proposition}[definition]{Proposition}
\newtheorem{corollary}[definition]{Corollary}
\newtheorem*{corollary*}{Corollary}
\newtheorem{remark}[definition]{Remark}
\DeclareMathOperator*{\minimize}{minimize}
\begin{document}

\title{Sparse Graphical Designs via Linear Programming}
\date{September 2023}
\subjclass[2020]{05C90, 90B80, 90C27}

\author[Al-Thani]{Hessa Al-Thani}
\address[H. Al-Thani]{Industrial and Operations Engineering Department,
University of Michigan, Ann Arbor}
\email{\textcolor{blue}{\href{mailto:}{hessakh@umich.edu}}}

\author[Babecki]{Catherine Babecki}
\address[C. Babecki]{Department of Mathematics and the Computing + Mathematical Sciences, California Institute of Technology}
\email{\textcolor{blue}{\href{mailto:}{cbabecki@caltech.edu}}}

\author[Mart\'inez Mori]{J. Carlos Mart\'{i}nez Mori}
\address[J.~C. Mart\'inez Mori]{Schmidt Science Fellows}
\email{\textcolor{blue}{\href{mailto:jmartinezmori@schmidtsciencefellows.org}{jmartinezmori@schmidtsciencefellows.org}}}

\maketitle

\begin{abstract}
Graphical designs are a framework for sampling and numerical integration of functions on graphs.
In this note, we introduce a method to address the trade-off between graphical design sparsity and accuracy.
We show how to obtain sparse graphical designs via linear programming and design objective functions that aim to maximize their accuracy.
We showcase our approach using yellow taxicab data from New York City.
\end{abstract}

\section{Introduction}
\label{sec: introduction}

Graphs are a useful modeling tool in a variety of domains such as cyber-physical systems, the social sciences, and epidemiology.
Applications in these areas often require large data sets to be collected and monitored over time.
Their complexity and sheer scale generally turns this into a challenging task, which motivates the field of \emph{graph signal processing}~\cite{ortegaBook}.

\emph{Graphical designs}~\cite{SteinerbergerGraphDesigns} are a nascent research area at the intersection of graph signal processing, combinatorics, and optimization that provide a framework for sampling and numerically integrating functions on (the nodes of) a graph.
The goal behind numerical integration is to capture the \emph{global} behavior of a function by observing only \emph{local} information.
A graphical design is a possibly weighted subset of nodes that captures the global behavior of a given family of functions on a graph.

Naturally, graphical designs exhibit a trade-off between sparsity and accuracy.
That is, the more nodes one is allowed to monitor, the larger the family of functions one is able to capture exactly (and the better job one can do with functions outside the family).
In this work, we introduce a method to address this trade-off.
Our contributions are two-fold:
\begin{enumerate}
    \item 
    In Theorem~\ref{theorem: sparse}, we show that sparse graphical designs can be obtained via linear programming, where the sparsity guarantee depends on the dimension of the space of functions one seeks to capture exactly.
    This result follows readily from structural properties of extreme point solutions.
    \item
    The use of linear programming provides additional modeling flexibility that can help distinguish otherwise equivalent graphical designs.
    That is, based on deployment goals, one can design objective functions that assign different ``costs'' to different solutions.
    In Proposition~\ref{proposition: parametric bound} and Corollary~\ref{corollary: nonparametric bound}, we design objective functions that aim to capture (as accurately as possible) functions different from those already captured by Theorem~\ref{theorem: sparse}, in this way promoting robust numerical integration.
\end{enumerate}

We showcase our approach to graphical design optimization using yellow taxicab data from New York City (NYC).
Our computational experiments suggest that, in this setting, monitoring only a small fraction of the nodes ($\sim 5\%$) suffices to consistently capture the global behavior of historical data with under $5\%$ error.

The remainder of this note is organized as follows.
Section~\ref{sec: background} provides some background on quadrature rules, spectral graph theory, and graphical designs.
In Section~\ref{sec: graphical design optimization}, we develop our method for graphical design using linear programming.
Lastly, Section~\ref{sec: computational experiments} summarizes our computational experiments.

\section{Background}
\label{sec: background}

\subsection{Quadrature Rules and Sampling}
\label{sec: quadrature rules and sampling}

Graphical designs were introduced by Steinerberger~\cite{SteinerbergerGraphDesigns} as an analogue of classical quadrature rules on continuous domains to the realm of finite graphs. Essentially any book on approximation or numerical methods will contain an introduction to quadrature, for instance \cite{HammingNumericalMethodsBook, ApproxTheoryBook}.
In broad strokes, a \emph{quadrature rule} for a domain $\Omega$ is a set of points $x_1, x_2, \ldots, x_n\in \Omega$ together with weights $a_1, a_2, \ldots, a_n \in \RR$ so that if a function $f: \Omega \to \RR$ is ``nice enough,'' the average of $f$ over $\Omega$ is approximated by the weighted average over the quadrature points: 
\begin{equation}
\label{eq: general quadrature}
\frac{1}{\mu(\Omega)}\int_\Omega f(x) d\mu(x) \approx \sum_{i=1}^n a_i f(x_i).     
\end{equation} 

Different definitions of ``nice enough'' and ``good approximation'' give rise to different quadrature rules.
Graphical designs are most closely inspired by Sobolev-Lebedev quadrature \cite{LebedevQuad, SobolevQuad} and spherical $t$-designs \cite{DGSSphericalCodes}, which are quadrature rules for the sphere $\SS^{d-1}$ for which (\ref{eq: general quadrature}) holds at equality for the \emph{low frequency eigenfunctions} of the spherical Laplacian operator 
\begin{equation*}
    \Delta_{\SS^{d-1}} = \sum_{i=1}^d \frac{\partial^2}{\partial x_i^2}.
\end{equation*}
A function $f$ is an \emph{eigenfunction} of an operator $\Delta$ if there is $\l \in \RR$ so that $\Delta f = \l f$.  If $|\l|$ is small, $f$ has low frequency; if $|\l|$ is large, $f$ has high frequency.
The eigenfunctions of $\Delta_{\SS^2}$ are known as \emph{spherical harmonics}, and they play a similar role to $\sin k\theta$ and $\cos k\theta$ in the higher-dimensional version of Fourier series.  For further introduction to spherical harmonics, we refer to the survey \cite{sphericalSurvey}.
Low frequency spherical harmonics (think: $\sin \theta)$ are smoother with respect to the structure and symmetry of the underlying sphere, in the sense that in the neighborhood of a point, the function does not vary much. A subset of points that averages these functions intuitively captures this structure and symmetry.
On the other hand, high frequency eigenfunctions  (think: $\sin 500\theta)$ are highly oscillatory, and there is not much hope to capture their global behavior through only a few points.

Quadrature rules for very simple domains like line segments, boxes, and spheres are classical and even ancient, dating back to the ancient Greeks and Babylonians and scientists such as Gauss (Gaussian quadrature) and Newton (Newton-Coates formulas).
Numerical integration on general manifolds is more contemporary, 
see, for instance, \cite{BRVmanifoldsI,BRVmanifoldsII,GGManifoldDesigns, SteinerbergerRiemannDesigns}.
We also refer to the work of Pesenson~(e.g., \cite{PesensonIII,   PesensonII, PesensonI}).

Sampling on graphs has been investigated primarily from a graph signal processing (GSP) perspective.
Broadly speaking, GSP is concerned with observing signals (i.e., graph functions) on a subset of nodes to reconstruct unobserved signals on the remainder of the graph~\cite{ortegaBook}.
Graphical design can be seen as an area within GSP with the much more targeted goal of numerical integration.
Naturally, both GSP and graphical design require structural assumptions on the class of observed signals (e.g., \emph{bandlimited} signals).
However, since in practice signals may not exhibit the assumed structure (e.g., they may not be truly bandlimited), further design criteria for robust sampling are typically considered.
In the GSP literature, greedy or randomized sampling algorithms have been devised for the general goal of robust signal reconstruction~\cite{samplingAGO, samplingBWCNG,SamplingCVSK, samplingMSLR, samplingSA, samplingTEOC, samplingTBD}.
In comparison, in this note we show that linear programming can be used for sampling with the more targeted goal of numerical integration, and that robustness can be accounted for through the objective function.
However, we note that one drawback of this approach is that it requires spectral decomposition, which induces significant storage requirements for large graphs.

Throughout this paper, we let $G = (V,E,w)$ be a simple, finite, connected graph with positive edge weights $w: E \rightarrow 
\mathbb{R}_{> 0}$, where $V = [n] \coloneqq \{1, 2, \ldots, n\}$, and we consider functions $f: [n] \rightarrow \mathbb{R}$ on the nodes.
We often think of $f$ as a vector $f \in \mathbb{R}^n$, where its $i$th entry is $f(i)$. 
Translating (\ref{eq: general quadrature}) to functions on graphs, we say a quadrature rule on $G$ is a proper subset of the nodes $S \subset [n]$ and weights $a_i \in \RR$ for $i \in S$ so that 
\begin{equation}
\label{eq: graph quadrature} 
    \frac{1}{n}\sum_{i \in [n]} f(i) \approx \sum_{i \in S} a_i f(i)
\end{equation} 
for some choice of ``nice enough'' functions and of ``good approximation.''

\subsection{The Graph Laplacian}
\label{sec: laplacian matrix}

There are many senses in which graph Laplacians are an appropriate translation to graphs of the spherical Laplacian and the more general Laplace-Beltrami operator for a smooth manifold; see \cite{ BIKGraphLaplacians,HALGraphLaplacians, Singer}, for instance. 
Thus we look to the eigenfunctions of graph Laplacians, which are just the eigenvectors of a matrix, to provide the classes of functions we seek to approximate well through quadrature. The study of graph operators and how their spectral properties relate to the combinatorial structure of the graph is known as \emph{spectral graph theory} or \emph{algebraic graph theory}; we refer to \cite{ChungSpectral,GodsilRoyleBook, SpielmanBook}.

Let $A \in \mathbb{R}^{n \times n}$ be the weighted \emph{adjacency matrix of $G$}, where $A_{ij} = w(ij)$ if $ij \in E$ and $A_{ij} = 0$ otherwise.
Similarly, let $D \in \mathbb{R}^{n \times n}$ be the diagonal \emph{degree matrix} recording the weighted degree of the nodes:  $D_{ii} = \sum_{j = 1}^n  A_{ij} = \sum_{ij \in E} w(ij)$ and $D_{ij} = 0$ for $i \neq j$.
The \emph{combinatorial Laplacian} of $G$ is the matrix $L = D - A$. 
The matrix $L$ is positive semidefinite (PSD), denoted $L\succeq 0$, as seen from its quadratic form:
\begin{equation*}
   x^\top L x = \sum_{ij \in E} w_{ij} (x_i - x_j)^2 \geq 0.  
\end{equation*}

There are many graph Laplacians worth considering, including $I - AD^{-1}$ and $D^{-1/2} A D^{-1/2}$. We focus on $L = D-A$ because it has many desirable structural properties, some of which we highlight next.
Since $L\succeq 0$, it has non-negative eigenvalues $ 0 \leq \lambda_1 \leq \lambda_2 \leq \cdots \leq \lambda_n$ with corresponding eigenvectors $\phi_1, \phi_2, \ldots, \phi_n$ that form an orthogonal basis of $\mathbb{R}^n$. This ordering on the spectrum of $L$ is analogous to the frequency ordering in the spherical case.  
If $\l_i$ is small, $\phi_i$ is ``smoother'' with respect to the graph's geometry; if $\l_i$  is large, $\phi_i$ is highly oscillatory \cite{SteinerbergerEigenfunctions}.  
It is a standard fact of spectral graph theory that the all-ones vector $\ones_n$ spans the eigenspace for $\l_1 =0$ if and only if $G$ is connected.

\subsection{Graphical Designs}
\label{sec: graphical designs}

We first establish some notation.  We will often refer to the eigenvectors and eigenvalues of $L$ as the eigenvectors and eigenvalues of the graph $G$.  The \emph{support} of a vector $a\in \RR^n$ is $\supp(a) = \{ i \in [n] : a_i \neq 0\}$.
Let $S \subset [n]$ be a subset of the nodes and $a \in \mathbb{R}^n$ be supported on $S$. 
We say the pair $(S,a)$ \emph{averages} a function $f$ if 
\begin{equation}
\label{eq: globally}
   \frac{1}{n} \sum_{i \in [n]} f(i) =  \sum_{i \in S} a_i f(i).  
\end{equation}

To mimic quadrature rules for the sphere, graphical designs were first defined by averaging the low frequency eigenvectors of a graph. 

\begin{definition}[\cite{SteinerbergerGraphDesigns}]
Let $G = ([n],E,w)$ be a positively weighted, connected, simple graph with eigenvectors $\phi_1 ,\phi_2,\ldots, \phi_n$ ordered by frequency. 
A \emph{$k$-graphical design} is a pair $(S,a)$ that averages $\phi_1 ,\phi_2,\ldots, \phi_k$ simultaneously: 
\begin{align*}
     \frac{1}{n} \sum_{i \in [n]} \phi_j(i) = \sum_{i \in S} a_i \phi_j(i) \qquad \text{  for all  } j \in [k].
\end{align*}
\end{definition}

To be precise, this definition may be ill-defined if a graph has eigenspaces with multiplicity \cite{BabeckiGraphDesignsCodes}.
Eigenspace multiplicity hints that a graph has some additional structure and symmetry, the most extreme case being strongly regular graphs that have only three eigenspaces \cite[Lemma 10.2.1]{GodsilRoyleBook}.
Graphs arising from real-world data are unlikely to have such symmetries, thus the technicality about eigenspace multiplicity is unlikely to be significant in applications.
We also note that negative quadrature weights are typically undesirable, as they can lead to unstable or divergent solutions \cite{HuybrechsInstability}. 
Hence we focus on positive quadrature weights $a \geq 0$. 

Graphical designs with respect to the frequency order are function-agnostic in the sense that there is no particular function data used to define them. 
If there is a certain class of graph functions $f^1, f^2, \ldots ,f^T : [n] \to \RR$ with respect to which one seeks to sample, it is possible that the first $k$ eigenvectors are not the most important $k$ eigenvectors for these particular functions.
A worst-case scenario is that each function is orthogonal to $\phi_1, \phi_2, \ldots \phi_k$. 
Thus we make use of the following broader definition. 

\begin{definition}[\cite{BabeckiThomasGaleDuality,babeckiShiroma}]
Let $G = ([n],E,w)$ be a positively weighted, connected, simple graph with eigenvectors $\phi_1 ,\phi_2,\ldots, \phi_n$ ordered by frequency, and let $1\in J\subset[n]$. 
A \emph{$J$-graphical design} is a pair $(S,a)$ which averages $\phi_j$ for all $j\in J $ simultaneously: 
\begin{align*}
    \frac{1}{n} \sum_{i \in [n]} \phi_j(i) = \sum_{i \in S} a_i \phi_j(i)  \qquad \text{  for all  } j \in J .
\end{align*}
\end{definition}
Using GSP terminology, this says that the design averages the given class of bandlimited $|J|$-sparse signals.
The averaging condition in (\ref{eq: globally}) simplifies for eigenvectors of $L$.  We use $[2:n]$ to denote the subset $\{2,\ldots, n\}$.
\begin{proposition}[{\cite[Lemma 2.4]{babeckiShiroma}}]
\label{proposition: orthogonal}
If $\phi_1, \phi_2, \ldots, \phi_n$ are the eigenvectors of $L$ and $j \in [2:n]$, then the pair $(S,a)$ averages $\phi_j$ if and only if
\begin{equation}
\label{eq: orthogonal}
   \sum_{i \in S} a_i \phi_j(i) = 0 
\end{equation}
Up to scaling, a vector $a \geq 0$ averages $\phi_1$ if and only if $\ones_n^\top  a = 1$.
\end{proposition}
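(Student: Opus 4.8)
The plan is to read off everything from two standard spectral facts already recorded in Section~\ref{sec: laplacian matrix}: the eigenvectors $\phi_1, \phi_2, \ldots, \phi_n$ of $L$ form an orthogonal basis of $\RR^n$, and since $G$ is connected, the eigenspace for $\lambda_1 = 0$ is spanned by $\ones_n$, so $\phi_1 = c\,\ones_n$ for some scalar $c \neq 0$. The entire statement then reduces to substituting these facts into the definition of averaging in (\ref{eq: globally}); there is no analytic content beyond this bookkeeping.

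First I would treat the case $j \in [2:n]$. Because $\phi_j$ is orthogonal to $\phi_1 = c\,\ones_n$ and $c \neq 0$, we get $\ones_n^\top \phi_j = \sum_{i \in [n]} \phi_j(i) = 0$. Hence the left-hand side $\frac{1}{n}\sum_{i \in [n]} \phi_j(i)$ of the averaging condition vanishes, so $(S,a)$ averages $\phi_j$ precisely when $0 = \sum_{i \in S} a_i \phi_j(i)$, which is exactly (\ref{eq: orthogonal}). The equivalence is an honest ``if and only if'' because it is a rewriting of one and the same equation after deleting a term that is identically zero.

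For the statement about $\phi_1$, I would substitute $\phi_1 = c\,\ones_n$ directly: the left-hand side of (\ref{eq: globally}) becomes $\frac{1}{n}\sum_{i \in [n]} c = c$, while the right-hand side becomes $c\sum_{i \in S} a_i = c\,\ones_n^\top a$, where I use that $a$ is supported on $S$ so that $\sum_{i \in S} a_i = \ones_n^\top a$. Dividing by $c \neq 0$ shows that $(S,a)$ averages $\phi_1$ if and only if $\ones_n^\top a = 1$, and since any nonzero $a \geq 0$ has $\ones_n^\top a > 0$ it can always be rescaled to meet this normalization, which is what the phrase ``up to scaling'' is recording. I do not anticipate a genuine obstacle: the only points needing care are extracting the orthogonality relation $\ones_n^\top \phi_j = 0$ from connectedness and keeping the support bookkeeping $\sum_{i \in S} a_i = \ones_n^\top a$ straight, together with the harmless fact that the normalization of the eigenvector $\phi_1$ drops out.
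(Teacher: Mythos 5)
Your proof is correct, and it is essentially the standard argument: the paper itself states this proposition with a citation to \cite{babeckiShiroma} and gives no proof, but the reasoning you use (orthogonality of the eigenbasis forces $\ones_n^\top \phi_j = 0$ for $j \geq 2$ by connectedness, and substituting $\phi_1 = c\,\ones_n$ into the averaging condition for the second claim) is exactly the reasoning the paper relies on implicitly, e.g.\ in its proof of the subsequent corollary. No gaps.
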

We require $1\in J$ to avoid the trivial solution $(\varnothing, \zeros_n)$, where $\zeros_n$ represents the all-zeros vector of length $n$.
Ideally, we would like a graphical design to average every function $f:[n] \to \RR$ exactly, but this is not possible with a proper subset of nodes.  
Recall that averaging a basis of $\RR^n$ is equivalent to averaging every function on $\RR^n$.

\begin{corollary}[{\cite[Lemma 2.5]{BabeckiThomasGaleDuality}}]
The pair $(S,a)$ averages every eigenvector of $L$ if and only if $S = [n]$ and $a = \ones_n/n$.  
\end{corollary}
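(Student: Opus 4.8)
The plan is to prove both directions of the equivalence, with essentially all of the content living in the forward (``only if'') direction. The backward direction is immediate: if $S = [n]$ and $a = \ones_n/n$, then for any function $f$ (in particular any eigenvector) we have $\sum_{i \in S} a_i f(i) = \frac{1}{n} \sum_{i \in [n]} f(i)$ directly from the definition of averaging in~(\ref{eq: globally}), so $(S,a)$ averages $f$.

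For the forward direction, the first step is to note that the averaging condition~(\ref{eq: globally}) is linear in $f$: both the left-hand side $f \mapsto \frac{1}{n}\ones_n^\top f$ and the right-hand side $f \mapsto a^\top f$ (recall $a$ is supported on $S$, so $\sum_{i \in S} a_i f(i) = a^\top f$) are linear functionals of $f \in \RR^n$. Since $\phi_1, \phi_2, \ldots, \phi_n$ form a basis of $\RR^n$, if $(S,a)$ averages each $\phi_j$ then by linearity it averages every $f \in \RR^n$; this is precisely the remark that averaging a basis is equivalent to averaging every function, stated just before the corollary.

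The second step extracts $a$ (and hence $S$) from this. I would test the averaging identity on the standard basis vectors $e_1, e_2, \ldots, e_n$. For $e_i$, the left-hand side of~(\ref{eq: globally}) equals $1/n$, while the right-hand side equals $a_i$ (which is $0$ whenever $i \notin S$). This forces $a_i = 1/n$ for every $i \in [n]$; in particular no entry of $a$ vanishes, so $i \in S$ for all $i$, giving $S = [n]$, and $a = \ones_n/n$. Equivalently, reading the identity $a^\top f = (\ones_n/n)^\top f$ as holding for all $f$ shows $a = \ones_n/n$, after which $\supp(a) = [n] \subseteq S \subseteq [n]$ yields $S = [n]$.

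I do not anticipate a genuine obstacle: the statement is a direct consequence of the nondegeneracy of the basis together with the linearity of the averaging functional. If one prefers to route through Proposition~\ref{proposition: orthogonal} instead of the standard basis, the only point requiring a moment's care is the bookkeeping of the $\phi_1$ normalization. In that case, averaging $\phi_j$ for each $j \in [2:n]$ forces $a^\top \phi_j = 0$, so $a$ lies in the orthogonal complement of $\spanset\{\phi_2, \ldots, \phi_n\}$, which for a connected graph is $\spanset\{\ones_n\}$; hence $a = c\,\ones_n$, and the condition $\ones_n^\top a = 1$ from averaging $\phi_1$ pins down $c = 1/n$, recovering $a = \ones_n/n$ and $S = [n]$ as before.
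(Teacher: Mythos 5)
Your proof is correct, and your primary argument takes a mildly different route from the paper's. The paper works entirely inside the eigenbasis: by Proposition~\ref{proposition: orthogonal}, averaging $\phi_j$ for $j \in [2:n]$ means $a \perp \phi_j$, so $a \in \spanset\{\phi_2,\ldots,\phi_n\}^\perp = \spanset\{\ones_n\}$ (using connectivity), and the $\phi_1$ condition then forces $a = \ones_n/n$; $S = [n]$ follows since $a$ has full support. This is exactly the alternative you sketch in your final paragraph. Your main route instead uses only that the $\phi_j$ span $\RR^n$: linearity of the averaging functional upgrades ``averages a basis'' to ``averages every $f \in \RR^n$,'' and evaluating at the standard basis vectors $e_i$ reads off $a_i = 1/n$ directly. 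That version is more elementary --- it never invokes orthogonality of the eigenbasis or the reformulation in Proposition~\ref{proposition: orthogonal}, and would work verbatim for any basis of $\RR^n$ --- whereas the paper's argument is shorter in context because the orthogonality machinery is already set up and it makes visible exactly where connectivity (i.e., $\l_1$ having a one-dimensional eigenspace spanned by $\ones_n$) enters. Both correctly handle the passage from $a = \ones_n/n$ to $S = [n]$ via $\supp(a) \subseteq S$.
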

\begin{proof}
Let $\phi_1 = \ones_n, \phi_2, \ldots ,\phi_n$ be an orthogonal eigenbasis of $\RR^n$ from $L$.  
By Proposition~\ref{proposition: orthogonal},  $a \in \RR^n$ averages $\phi_j$ if and only if\[0 = \sum_{s \in S} \phi_j(s) = a^\top \phi_j,\] which is to say that $a$ is orthogonal to $\phi_j$. If $(S,a)$ averages the entire eigenbasis, then $a \in \spanset\{ \phi_2,\ldots, \phi_n\}^\perp = \spanset\{\ones_n\}$.
The assumption that $(S,a)$ also averages $\phi_1 = \ones_n$ implies that $a = \ones_n/n$.
In the other direction $([n], \ones_n/n)$ averages every eigenvector of $L$ by definition. 
\end{proof}

In the rest of this note we consider the extent to which one can exactly average certain eigenvectors and approximately average the remaining eigenvectors.

\section{Graphical Design Optimization}
\label{sec: graphical design optimization}

We now introduce mathematical programming formulations for graphical design optimization.
Recall $\phi_1, \phi_2, \ldots, \phi_n$ are the eigenvectors of $L$ ordered by frequency and $J \subset [n]$ indexes the subset of eigenvectors we seek to average.

\subsection{Exact Averaging}
\label{sec: exact averaging}

We are interested in finding \emph{sparse} graphical designs $(S,a)$ in the sense that $|S| \leq k$ for any given $k \in [n]$.
By Proposition~\ref{proposition: orthogonal}, if $1 \in J$, the problem of finding a $J$-graphical design supported on at most $k$ nodes corresponds to finding a feasible solution to the following system.
\begin{subequations}
\label{eq: feasibility simplified}
\begin{align}
    && \sum_{i\in[n]} y_i &\leq k, \label{cnstr: size} &  \\
    &&  a_i &\leq y_i, & \forall i \in [n] 
 \label{cnstr: coupling}\\
    && \sum_{i\in[n]} a_i \phi_1(i) &= \frac{1}{n}\sum_{i\in[n]} \phi_1(i), \label{cnstr: phi_1} &  \\
    && \sum_{i\in[n]} a_i \phi_j(i) &= 0, & \forall j \in J \setminus \{1\}  \label{cnstr: phi_j} \\
    &&  a &\in [0,1]^n & \\
    &&  y &\in \{0,1\}^n. &
\end{align}
\end{subequations}
Here, $y$ is the indicator vector of the subset $S$, meaning $y_i = 1$ if and only if $i \in S$.
Constraint~\eqref{cnstr: size} ensures $|S| \leq k$.
Constraints~\eqref{cnstr: coupling} ensure $\supp(a) \subseteq S$.
Lastly, constraints~\eqref{cnstr: phi_1}-\eqref{cnstr: phi_j} ensure $(S,a)$ averages $J$.

We note that, if $k \geq |J|$, there always exists a feasible solution to Problem~\eqref{eq: feasibility simplified} (see~\cite[Lemma~2, Remark~5]{RekhaStefanRandomWalks}, \cite[Theorem~3.14]{BabeckiThomasGaleDuality}, \cite[Theorem~3.9]{babeckiShiroma}).
However, it follows directly from Babecki and Shiroma~\cite[Theorem~6.3]{babeckiShiroma}
that, if $k < |J|$, it is NP-complete to decide the feasibility of Problem~\eqref{eq: feasibility simplified}.

\subsection{Approximate Averaging}
\label{sec: approximate averaging}

We now build on Problem~\eqref{eq: feasibility simplified} to formulate an optimization problem that can be solved efficiently and combines exact and \emph{approximate} averaging.
Let $\overline{J} = [n] \setminus J$ index the subset of eigenvectors we seek to average approximately.

\subsubsection{Sparse Solutions via Linear Programming}
Given any $c \in \RR^n$ and $J \subseteq [n]$ with $1 \in J$, consider the following linear program.
\begin{subequations}
\label{eq: lp c}
\begin{align}
    \minimize  && \sum_{i \in [n]} c_i a_i & & \label{cnstr: lp obj} \\
    \text{s.t.} 
            && \sum_{i\in[n]} a_i \phi_1(i) &= \frac{1}{n}\sum_{i\in[n]} \phi_1(i), &  \label{cnstr: lp phi_1} \\
            && \sum_{i\in[n]} a_i \phi_j(i) &= 0, & \forall j \in J \setminus \{1\} \label{cnstr: lp phi_J} \\
            &&  a &\in \RR_{\geq 0}^n. &
\end{align}
\end{subequations}
Problem~\eqref{eq: lp c} is feasible since letting $a_i = 1/n$ for all $i \in [n]$ forms a feasible solution.
Steinerberger and Thomas~\cite[Lemma~2]{RekhaStefanRandomWalks} show that there exists a sparse feasible solution $a$ to Problem~\eqref{eq: lp c} with $|\text{supp}(a)| \leq |J|$.
We note that their result is similarly implied and implemented by the following \emph{rank lemma} (Lemma \ref{lemma: rank lemma}) about basic feasible solutions to linear programs, a connection which the second author now also points out in \cite[Remark 6.8]{babeckiShiroma}.
See~\cite[Lemma~2.1.4]{lau2011iterative} for a proof of the rank lemma.

\begin{lemma}[Rank Lemma]
\label{lemma: rank lemma}
Let $P = \{x \in \RR^n: Ax = b, x \geq 0 \}$ where $A \in \RR^{m \times n}$ and $b \in \RR^{m \times 1}$, and let $x^*$ be a basic feasible solution to $P$.
Then, $|\supp(x^*)| \leq \rank(A) \leq m$.
\end{lemma}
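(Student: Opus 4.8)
The plan is to exploit the algebraic characterization of basic feasible solutions: a feasible point $x^*$ is a vertex (basic feasible solution) of the polyhedron $P = \{x \in \RR^n : Ax = b,\ x \geq 0\}$ precisely when the columns of $A$ indexed by $\supp(x^*)$ are linearly independent. Once that characterization is in hand, the claimed bound is immediate, since a linearly independent set of columns of $A$ has cardinality at most $\rank(A)$, and $\rank(A) \leq m$ because $A$ has only $m$ rows. So the entire content of the lemma reduces to proving one direction of this characterization: \emph{if $x^*$ is a basic feasible solution, then the support columns are linearly independent.}

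I would prove this contrapositive-style, by contradiction. Suppose the columns $\{A_i : i \in \supp(x^*)\}$ are linearly dependent. Then there is a nonzero vector $d \in \RR^n$ with $\supp(d) \subseteq \supp(x^*)$ and $Ad = 0$. The key construction is to perturb $x^*$ along $\pm d$: set $u = x^* + \e d$ and $v = x^* - \e d$ for a sufficiently small $\e > 0$. Because $d$ is supported only on coordinates where $x^*$ is strictly positive, for small enough $\e$ both $u$ and $v$ remain nonnegative; and since $Au = b + \e A d = b$ and likewise $Av = b$, both $u$ and $v$ lie in $P$. As $d \neq 0$ we have $u \neq v$, yet $x^* = \tfrac{1}{2}(u+v)$, so $x^*$ is a proper convex combination of two distinct points of $P$ and hence not a vertex, contradicting that it is a basic feasible solution.

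With the independence of the support columns established, the conclusion follows by the chain $|\supp(x^*)| \leq \rank(A) \leq m$, where the first inequality holds because $|\supp(x^*)|$ counts linearly independent columns of $A$ and the second because row rank equals column rank, bounded by the number of rows.

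The main obstacle is entirely in the contradiction step, specifically the quantitative choice of $\e$: one must verify that $\e$ can be taken small enough that the perturbation does not drive any coordinate negative. This is where it is essential that $d$ is supported on $\supp(x^*)$, so that the only coordinates moved are those where $x^*$ has strictly positive slack to spare; choosing $\e \leq \min_{i \in \supp(x^*)} x^*_i / |d_i|$ (over indices with $d_i \neq 0$) suffices. Everything else is bookkeeping. Alternatively, since this is a textbook fact, I would simply cite~\cite[Lemma~2.1.4]{lau2011iterative}, as the excerpt already does, and present the above as the underlying argument.
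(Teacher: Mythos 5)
Your proof is correct. The paper itself does not prove this lemma --- it simply cites Lau, Ravi, and Singh \cite[Lemma~2.1.4]{lau2011iterative} --- so there is no internal argument to compare against; your perturbation argument (if the support columns of $A$ were dependent, a direction $d$ with $Ad=0$ supported on $\supp(x^*)$ yields $x^* = \tfrac{1}{2}\left((x^*+\e d)+(x^*-\e d)\right)$ with both points feasible for small $\e$, contradicting extremality) is the standard textbook proof and is exactly what the cited source does. The only point worth flagging is definitional: under the common convention that a \emph{basic} feasible solution is by definition one whose support columns are linearly independent, the lemma is immediate and your vertex argument is establishing the equivalence of that convention with extremality --- which is the harder and more honest reading, and you handle it correctly, including the quantitative choice of $\e$.
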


\begin{theorem}
\label{theorem: sparse}
Given any $c \in \RR^n$ and $J \in [n]$ with $1 \in J$, let $a^*$ be a basic optimal solution to Problem~\eqref{eq: lp c}.
Then, $|\supp(a^*)| \leq |J|$.
\end{theorem}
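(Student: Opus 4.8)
The plan is to apply the Rank Lemma (Lemma~\ref{lemma: rank lemma}) directly to the feasible region of Problem~\eqref{eq: lp c}, so the proof should be quite short. First I would recast the constraints~\eqref{cnstr: lp phi_1}--\eqref{cnstr: lp phi_J} in the standard matrix form $Ax = b$ with $x \geq 0$. The natural choice is to let $A$ be the $|J| \times n$ matrix whose rows are the eigenvectors $\phi_j$ for $j \in J$; that is, the row indexed by $j$ is $(\phi_j(1), \ldots, \phi_j(n))$, and the right-hand side $b \in \RR^{|J|}$ has the entry $\frac{1}{n}\sum_{i} \phi_1(i)$ in the coordinate corresponding to $j = 1$ and $0$ in every other coordinate. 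With this identification the feasible region of Problem~\eqref{eq: lp c} is exactly a polyhedron of the form $P = \{a \in \RR^n : Aa = b,\ a \geq 0\}$, which is precisely the setting of the Rank Lemma with $m = |J|$.

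Next I would invoke the standard fact that a basic optimal solution to a linear program is in particular a basic feasible solution of the underlying polyhedron, so the hypotheses of Lemma~\ref{lemma: rank lemma} apply to $a^*$. The lemma then immediately yields
\begin{equation*}
|\supp(a^*)| \leq \rank(A) \leq m = |J|,
\end{equation*}
which is the desired bound. I would note that feasibility of Problem~\eqref{eq: lp c} is already established in the text (via the solution $a_i = 1/n$), and that a feasible, bounded linear program attains its optimum at a basic feasible solution, so a basic optimal $a^*$ exists and the statement is not vacuous.

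The main thing to be careful about is not really an obstacle but a bookkeeping point: the correspondence between the analytic constraints and the matrix $A$ must be set up so that the number of rows $m$ equals $|J|$ exactly, since that is where the sparsity bound comes from. In particular, constraint~\eqref{cnstr: lp phi_1} contributes one row and the $|J| - 1$ constraints in~\eqref{cnstr: lp phi_J} contribute the remaining rows, for a total of $|J|$. Because the bound in the Rank Lemma is $\rank(A) \leq m$, the conclusion holds regardless of whether the rows of $A$ are linearly independent; the orthogonality of the eigenvectors guarantees they are independent, so in fact $\rank(A) = |J|$, but we only need the inequality. Hence the entire argument reduces to identifying $A$ and $b$ correctly and quoting Lemma~\ref{lemma: rank lemma}, with no substantive calculation required.
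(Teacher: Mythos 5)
Your proposal is correct and follows essentially the same route as the paper: both recast constraints~\eqref{cnstr: lp phi_1}--\eqref{cnstr: lp phi_J} as $Aa = b$ with the $|J| \times n$ matrix whose rows are the $\phi_j^T$ for $j \in J$, note boundedness/feasibility so a basic optimal solution exists, and then quote the Rank Lemma. Your additional remark that only $\rank(A) \leq m$ is needed (rather than the exact equality $\rank(A) = |J|$ that the paper notes via orthogonality) is a fine bookkeeping observation but does not change the argument.
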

\begin{proof}
Constraint~\eqref{cnstr: lp phi_1} together with the non-negativity of $a$ imply Problem~\eqref{eq: lp c} is bounded, so it has an optimal solution.
Moreover, note that the matrix $U_J \in \RR^{|J| \times n}$ whose rows are $\phi_j^T$ for $j \in J$ encodes constraints~\eqref{cnstr: lp phi_1}-\eqref{cnstr: lp phi_J} and satisfies $\rank(U_J) = |J|$.
Then, the claim follows by Lemma~\ref{lemma: rank lemma}.
\end{proof}

\begin{corollary}
\label{corollary: sparse}
Given any $c \in \RR^n$ and $J \in [n]$ with $1 \in J$, let $a^*$ be a basic optimal solution to Problem~\eqref{eq: lp c}.
Then, if $k \geq |J|$, $a^*$ induces a feasible solution to Problem~\eqref{eq: feasibility simplified}.
\end{corollary}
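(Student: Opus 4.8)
The plan is to turn the sparse basic optimal solution $a^*$ into a feasible triple for the mixed-integer feasibility problem by reading off its support. Concretely, I would set $a = a^*$, let $S = \supp(a^*)$, and define $y \in \{0,1\}^n$ to be the indicator vector of $S$, so that $y_i = 1$ exactly when $a_i^* > 0$. With these choices the two averaging constraints \eqref{cnstr: phi_1} and \eqref{cnstr: phi_j} of Problem~\eqref{eq: feasibility simplified} are satisfied for free, since they coincide verbatim with constraints \eqref{cnstr: lp phi_1}--\eqref{cnstr: lp phi_J} of Problem~\eqref{eq: lp c}, which $a^*$ obeys by feasibility. Likewise, $y \in \{0,1\}^n$ holds by construction.

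The size constraint \eqref{cnstr: size} is where Theorem~\ref{theorem: sparse} does its work: because $a^*$ is a basic optimal solution, $|\supp(a^*)| \leq |J|$, so $\sum_{i \in [n]} y_i = |S| \leq |J| \leq k$ using the hypothesis $k \geq |J|$. I expect the only genuine subtlety to lie in the coupling constraints \eqref{cnstr: coupling} and the box constraint $a \in [0,1]^n$, since Problem~\eqref{eq: lp c} imposes merely $a \geq 0$ and could a priori return a solution with some coordinate exceeding $1$.

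To handle this I would use that $G$ is connected, so $\phi_1$ is a nonzero scalar multiple of $\ones_n$; as in Proposition~\ref{proposition: orthogonal}, constraint \eqref{cnstr: lp phi_1} then collapses to the normalization $\sum_{i \in [n]} a_i^* = 1$. Together with $a^* \geq 0$ this forces $0 \leq a_i^* \leq 1$ for each $i$, giving $a^* \in [0,1]^n$ at once. The same per-coordinate bound closes out the coupling constraints: if $a_i^* = 0$ then $y_i = 0 = a_i^*$, while if $a_i^* > 0$ then $y_i = 1 \geq a_i^*$ because $a_i^* \leq 1$. Verifying these points in turn shows that $(a^*, y)$ is feasible for Problem~\eqref{eq: feasibility simplified}. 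The main obstacle is thus conceptual rather than computational: recognizing that the normalization hidden inside constraint \eqref{cnstr: lp phi_1} is precisely what supplies the upper bounds that the linear program itself never explicitly enforces.
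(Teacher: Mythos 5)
Your proof is correct and follows exactly the argument the paper intends: the corollary is stated without proof as an immediate consequence of Theorem~\ref{theorem: sparse}, and your verification (support gives $y$, sparsity gives constraint~\eqref{cnstr: size}, and the normalization $\sum_i a_i^* = 1$ hidden in constraint~\eqref{cnstr: lp phi_1} via $\phi_1 \propto \ones_n$ gives $a^* \in [0,1]^n$ and the coupling constraints) fills in the details faithfully. No gaps.
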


In other words, not only a sparse feasible solution exists, but for \emph{any} linear objective function a sparse \emph{optimal} solution can be found efficiently via linear programming.

\subsubsection{Objective Functions}

In light of the sparsity guarantee of Theorem~\ref{theorem: sparse}, our goal is to design an objective function $c \in \RR^n$ such that a basic \emph{optimal} solution to Problem~\eqref{eq: lp c} given $c$ not only exactly averages $J$, but also approximately averages $\overline{J}$.
In this way, we promote numerical integration that is robust to functions not spanned by $J$.

\begin{remark}
In this work we focus on robustness.
However, we note that there may be other deployment goals that can be modeled by an appropriately-designed objective function.
For example, if the goal is to minimize communication costs between monitored nodes and some base station, $c_i$ may account for the distance between node $i$ and the station.
\end{remark}

Note that by Corollary~\ref{corollary: sparse}, the choice of $J$ must depend on the desired level of sparsity $k$, so we require $|J| \leq k$.
Here we consider two approaches:
\begin{enumerate}
    \item As described in Section~\ref{sec: laplacian matrix}, one natural choice is for $J$ to index the first $k$ eigenvectors by the frequency ordering.
    That is,
    \begin{equation}
    \label{eq: freq ordering}
        J = [k].
    \end{equation}
    \item A different approach assumes access to some statistic of the family of functions we seek to average.
    In particular, suppose we are given a sample\footnote{
    Consider settings in which data can be collected over the entire graph, possibly at a large cost, as part of a preliminary field study.
    Then, the purpose of the graphical design optimization might be the strategic placement of sensors for permanent real-time monitoring.
    } mean $\bar{f} \in \RR^n$.
    Then, a natural choice is for $J$ to index the first $k$ eigenvectors in decreasing order of the size of the projection to $\bar{f}$.
    That is, let $I = (i_1, i_2, \ldots, i_n)$ be the sequence of indices in $J$ sorted in decreasing order of $|\phi_j^T\bar{f}|$ for $j \in [n]$.
    Then,
    \begin{equation}
    \label{eq: fbar ordering}
        J = \{i_1, i_2, \ldots, i_k\}.
    \end{equation}

\end{enumerate}

Ideally, the objective function \eqref{cnstr: lp obj} approximately averages $\overline{J}$ in the sense that
\begin{equation}
\label{eq: J bar}
    \sum_{j \in \overline{J}} \left| \sum_{i \in [n]}\phi_j(i) \right|
\end{equation}
or some weighted version of it is minimized (recall $\sum_{i\in[n]} \phi_j(i) = 0$ for all $1 < j \leq n$ by Proposition~\ref{proposition: orthogonal}).
Unfortunately, the absolute value terms cannot be easily linearized without introducing auxiliary constraints that interfere with Theorem~\ref{theorem: sparse} and its sparsity guarantee.

As an alternative, we consider linear objective functions of the form $c \in \RR^n$ that serve as surrogates for \eqref{eq: J bar}.
These objective functions are based on the following upper bound on the absolute integration error given a generic function $f \in \RR^n$ (this result is similar to and closely follows techniques in Steinerberger and Thomas~\cite[Proposition~10]{RekhaStefanRandomWalks}).
\begin{proposition}
\label{proposition: parametric bound}
Let $\phi_1, \phi_2, \ldots, \phi_n$ be the eigenvectors of $L$, $a$ be a feasible solution to Problem~\eqref{eq: lp c} given $J \subseteq [n]$ with $1 \in J$, and $f \in \RR^n$.
Then,
\begin{equation*}
    \left|\frac{1}{n}\sum_{i \in [n]} f(i)  - \sum_{i \in [n]} a_if(i) \right| \leq \sum_{i\in[n]} a_i \left| \sum_{j \in \overline{J}} \phi_j(i) (\phi_j^Tf)\right|. 
\end{equation*}
\end{proposition}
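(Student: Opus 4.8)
The plan is to diagonalize the error using the orthonormal eigenbasis $\phi_1,\ldots,\phi_n$ of $L$. Writing $f=\sum_{j\in[n]}(\phi_j^Tf)\,\phi_j$ and using $\phi_1=\ones_n/\sqrt{n}$, both the exact average $\frac1n\sum_{i\in[n]}f(i)=\frac1n\ones_n^Tf$ and the estimate $\sum_{i\in[n]}a_if(i)=a^Tf$ become linear in the projection coefficients $\phi_j^Tf$. Subtracting them and grouping frequency by frequency, I would obtain
\[\frac1n\sum_{i\in[n]}f(i)-\sum_{i\in[n]}a_if(i)=\sum_{j\in[n]}(\phi_j^Tf)\left(\tfrac1n\ones_n^T\phi_j-\sum_{i\in[n]}a_i\phi_j(i)\right),\]
so the integration error is a weighted sum of the per-eigenvector averaging defects of $a$.

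Next I would eliminate most of these terms. For $j\in J$, feasibility of $a$ for Problem~\eqref{eq: lp c} (constraints~\eqref{cnstr: lp phi_1}--\eqref{cnstr: lp phi_J}, equivalently the averaging characterization of Proposition~\ref{proposition: orthogonal}) says precisely that the defect $\tfrac1n\ones_n^T\phi_j-\sum_{i\in[n]}a_i\phi_j(i)$ vanishes. For $j\in\overline{J}$, the hypothesis $1\in J$ forces $\overline{J}\subseteq[2:n]$, so orthogonality of $\phi_j$ to $\phi_1=\ones_n/\sqrt{n}$ gives $\ones_n^T\phi_j=0$, and the defect collapses to $-\sum_{i\in[n]}a_i\phi_j(i)$. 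Hence only the $\overline{J}$ terms survive:
\[\frac1n\sum_{i\in[n]}f(i)-\sum_{i\in[n]}a_if(i)=-\sum_{j\in\overline{J}}(\phi_j^Tf)\sum_{i\in[n]}a_i\phi_j(i).\]

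Finally, I would interchange the order of summation to factor out the design weights, rewriting the right-hand side as $-\sum_{i\in[n]}a_i\sum_{j\in\overline{J}}\phi_j(i)(\phi_j^Tf)$, and then take absolute values. Because $a\geq0$, each $|a_i|=a_i$ pulls outside a triangle inequality, yielding
\[\left|\frac1n\sum_{i\in[n]}f(i)-\sum_{i\in[n]}a_if(i)\right|\le\sum_{i\in[n]}a_i\left|\sum_{j\in\overline{J}}\phi_j(i)(\phi_j^Tf)\right|,\]
which is the stated bound. I do not anticipate a genuine obstacle: the argument is essentially Parseval-style bookkeeping followed by the triangle inequality. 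The single point that demands care is the normalization convention for the eigenbasis. The claimed bound uses $\phi_j^Tf$ as the projection coefficient, which presumes $\|\phi_j\|=1$ (so that $\phi_1=\ones_n/\sqrt{n}$ rather than $\ones_n$); without this convention, stray factors of $\|\phi_j\|^{-2}$ appear in the expansion of $f$ and must be tracked through every step. I would therefore state the orthonormality assumption explicitly at the outset, after which the vanishing of the in-$J$ defects guaranteed by feasibility does the rest.
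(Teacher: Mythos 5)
Your proposal is correct and follows essentially the same route as the paper: expand $f$ in the orthonormal eigenbasis, use the feasibility constraints (and orthogonality to $\phi_1=\ones_n/\sqrt{n}$) to reduce the integration error to $\sum_{j\in\overline{J}}(\phi_j^Tf)(\phi_j^Ta)$, then interchange summations and apply the triangle inequality with $a\geq 0$. Your explicit flagging of the orthonormality convention matches the paper's opening rescaling step, so there is nothing substantive to add.
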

\begin{proof}
First, note that after rescaling to obtain $\phi_1 = \frac{1}{\sqrt{n}}\ones$ we can write
\begin{align*}
    f 
    = \sum_{j \in [n]} (\phi_j^T f) \phi_j 
    = \left(\frac{1}{n}\sum_{i \in [n]} f(i) \right) \ones + \sum_{j=2}^n (\phi_j^T f) \phi_j.
\end{align*}
Then,
\begin{align*}
    a^Tf 
    &= a^T\left(\left(\frac{1}{n}\sum_{i \in [n]} f(i) \right) \ones + \sum_{j=2}^n (\phi_j^T f) \phi_j\right) 
    = \frac{1}{n}\sum_{i \in [n]} f(i) + \sum_{j \in \overline{J}} (\phi_j^T f) (\phi_j^T a),
\end{align*}
where the second equality holds since $\phi_1^T a = 1$ and $\phi_j^Ta = 0$ for all $j \in J \setminus \{1\}$.
Therefore, the absolute integration error is given by
\begin{align*}
    \left| \frac{1}{n}\sum_{i \in [n]} f(i)  - \sum_{i \in [n]} a_if(i) \right| = \left| \sum_{j \in \overline{J}} (\phi_j^T f) (\phi_j^T a) \right|.
\end{align*}
Upon interchanging the order of summation we obtain
\begin{align*}
    \left|\frac{1}{n}\sum_{i \in [n]} f(i)  - \sum_{i \in [n]} a_if(i) \right| 
    &= \left| \sum_{i\in[n]} a_i \left(\sum_{j \in \overline{J}} \phi_j(i) (\phi_j^T f)\right)\right| 
    \leq \sum_{i\in[n]} a_i \left| \sum_{j \in \overline{J}} \phi_j(i) (\phi_j^T f)\right|,
\end{align*}
where the inequality holds by the triangle inequality and since $a \geq 0$.
\end{proof}
The next result follows from an application of the Cauchy-Schwarz inequality.
\begin{corollary}
\label{corollary: nonparametric bound}
If furthermore $\sqrt{\sum_{j \in \overline{J}} (\phi_j^T f)^2} \leq 1$, then
\begin{equation*}
    \left|\frac{1}{n}\sum_{i \in [n]} f(i)  - \sum_{i \in [n]} a_if(i) \right| \leq \sum_{i\in[n]} a_i \sqrt{\sum_{j \in \overline{J}} |\phi_j(i)|^2}.
\end{equation*}
Note that this holds for any $f \in \RR^n$ up to scaling.
\end{corollary}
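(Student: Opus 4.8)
The plan is to start from the bound established in Proposition~\ref{proposition: parametric bound} and control the inner sum over $\overline{J}$ by a single application of the Cauchy--Schwarz inequality. Fixing a node $i \in [n]$, I would view the quantity $\sum_{j \in \overline{J}} \phi_j(i)(\phi_j^T f)$ as the Euclidean inner product of the two vectors $(\phi_j(i))_{j \in \overline{J}}$ and $(\phi_j^T f)_{j \in \overline{J}}$ indexed by $\overline{J}$. Cauchy--Schwarz then yields
\begin{equation*}
\left| \sum_{j \in \overline{J}} \phi_j(i)(\phi_j^T f) \right| \leq \sqrt{\sum_{j \in \overline{J}} |\phi_j(i)|^2} \cdot \sqrt{\sum_{j \in \overline{J}} (\phi_j^T f)^2}.
\end{equation*}

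Next I would invoke the hypothesis $\sqrt{\sum_{j \in \overline{J}} (\phi_j^T f)^2} \leq 1$ to discard the second factor, obtaining the pointwise bound $\left| \sum_{j \in \overline{J}} \phi_j(i)(\phi_j^T f) \right| \leq \sqrt{\sum_{j \in \overline{J}} |\phi_j(i)|^2}$ for each $i$. Since $a \geq 0$, I can multiply by $a_i$ and sum over $i \in [n]$ without reversing the inequality, which bounds the right-hand side of Proposition~\ref{proposition: parametric bound} by $\sum_{i \in [n]} a_i \sqrt{\sum_{j \in \overline{J}} |\phi_j(i)|^2}$. Chaining this with Proposition~\ref{proposition: parametric bound} gives the claim.

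There is no substantial obstacle here; the only points requiring care are correctly identifying the inner-product structure over the index set $\overline{J}$ (rather than over all of $[n]$), so that the norm that survives is exactly $\sqrt{\sum_{j \in \overline{J}} |\phi_j(i)|^2}$, and using $a_i \geq 0$ to justify the termwise weighting of the pointwise bound. For the closing remark, given any nonzero $f$ one rescales by $1/\sqrt{\sum_{j \in \overline{J}} (\phi_j^T f)^2}$ so that the hypothesis holds with equality; since both the integration error and the bound are homogeneous of degree one in $f$, the inequality for the rescaled function is equivalent to that for $f$, so this normalization is harmless.
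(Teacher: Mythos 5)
Your proposal is correct and matches the paper's intended argument exactly: the paper states only that the corollary ``follows from an application of the Cauchy--Schwarz inequality,'' and your proof supplies precisely that step, applying Cauchy--Schwarz over the index set $\overline{J}$ pointwise in $i$ and then weighting by $a_i \geq 0$. Your handling of the ``up to scaling'' remark via degree-one homogeneity is also the natural reading of the paper's closing sentence.
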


Intuitively, we would like to find a feasible solution $a$ to Problem~\eqref{eq: lp c} that makes the upper bounds in Proposition~\ref{proposition: parametric bound} and Corollary~\ref{corollary: nonparametric bound} as tight as possible.
This suggests two approaches for the design of a cost function $c \in \RR^n$:
\begin{enumerate}
    \item In the absence of access to some statistic of the family of functions we seek to average, based on Corollary~\ref{corollary: nonparametric bound} we set
    \begin{equation}
    \label{eq: c nonparametric}
        c_i = \sqrt{\sum_{j \in \overline{J}} |\phi_j(i)|^2}
    \end{equation}
    for all $i \in [n]$.
    Here we leverage the fact that any scaling constant appearing in Corollary~\ref{corollary: nonparametric bound} (and hence in~\eqref{eq: c nonparametric}) does not change the set of optimal solutions.
    \item  If given access to a sample mean $\bar{f} \in \RR^n$, based on Proposition~\ref{proposition: parametric bound}, we set
    \begin{equation}
    \label{eq: c parametric}
        c_i = \left| \sum_{j \in \overline{J}} \phi_j(i) (\phi_j^T \bar{f})\right|
    \end{equation}
    for all $i \in [n]$.
\end{enumerate}

\section{Computational Experiments}
\label{sec: computational experiments}

In this section, we showcase our approach to graphical design optimization using travel demand data from NYC.
We use the \texttt{osmnx} package of Boeing~\cite{boeing2017osmnx} to obtain a crowdsourced, simple undirected graph $G = (V,E)$ representing the Manhattan road network.
Roughly speaking, the nodes $V$ represent intersections and the edges $E$ represent street segments between pairs of intersections.
The graph has $|V| = 4,294$ nodes and $|E| = 7,497$ edges.
The edges are weighted by length in meters.
We use the \texttt{networkx} package~\cite{hagberg2008exploring}to compute the spectra of the combinatorial Laplacian $L$ and the commercial mathematical programming solver \texttt{Gurobi}~\cite{gurobi} to implement Problem~\eqref{eq: lp c}.

We use data retrieved from the NYC Taxi and Limousine Commission (TLC)~\cite{taxidata} to obtain a collection $f^1, f^2, \ldots, f^T \in \RR^n$ of functions encoding the number of yellow taxicabs hailed at each node in $G$ over a number of days.
We focus on trips that took place on the weekdays of June 2016, with a start time between $7-10$am (i.e., the morning commute).
This leads to a total of $T=29$ different functions, one for each day.
We match the starting point of each trip to the nearest node in $G$ using geographical (latitude and longitude) coordinates.
To implement~\eqref{eq: c parametric} we compute the sample mean travel demand $\bar{f} = \frac{1}{T}\sum_{t \in [T]} f^t$.

Figure~\ref{fig: manhattan} shows a visual representation of the input data on the Manhattan road network, together with graphical designs obtained through Problem~\eqref{eq: lp c}.
Here $k = 214$, corresponding to roughly $5\%$ of the total number of nodes.
At first glance, it might seem as if letting $J$ be given by~\eqref{eq: freq ordering} and $c$ be given by \eqref{eq: c nonparametric} leads to a sparser graphical design (Figure~\ref{fig: manhattan b}).
However, by Theorem~\ref{theorem: sparse}, each of these graphical designs actually has the same sparsity guarantee.
Therefore, letting $J$ be given by~\eqref{eq: fbar ordering} and $c$ be given by \eqref{eq: c nonparametric} leads to a graphical design that prioritizes nodes sustaining significant demand (Figure~\ref{fig: manhattan d}).

\begin{figure}[ht]
    \centering
    \begin{subfigure}[b]{0.24\linewidth}
        \centering
        \includegraphics[width=\linewidth]{./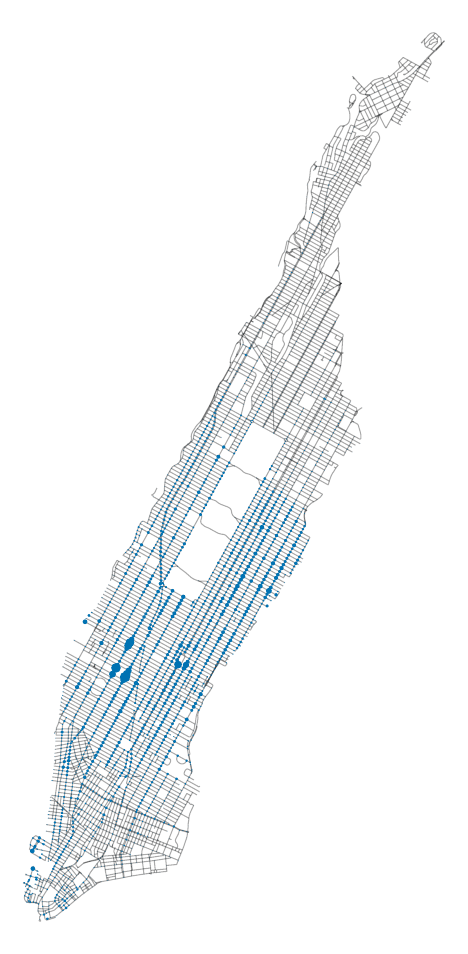}
        \caption{
        Input data.
        }
        \label{fig: manhattan a}
    \end{subfigure}
    \begin{subfigure}[b]{0.24\linewidth}
        \centering
        \includegraphics[width=\linewidth]{./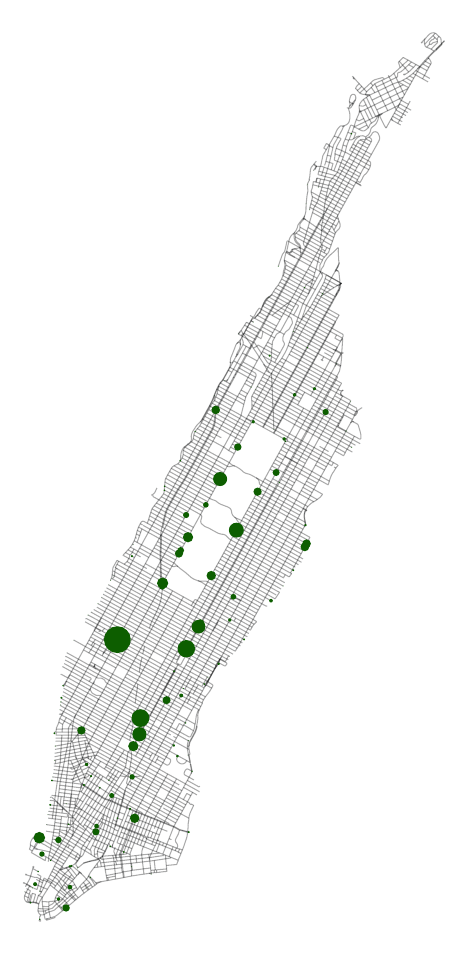}
        \caption{
        $J$ given by~\eqref{eq: freq ordering}, \\
        $c$ given by~\eqref{eq: c nonparametric}.
        }
        \label{fig: manhattan b}
    \end{subfigure}
    \begin{subfigure}[b]{0.24\linewidth}
        \centering
        \includegraphics[width=\linewidth]{./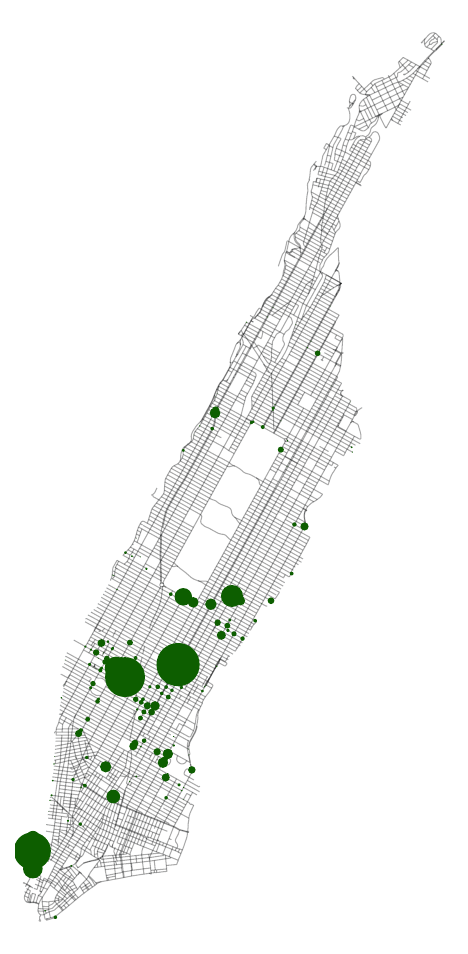}
        \caption{
        $J$ given by~\eqref{eq: fbar ordering}, \\
        $c$ given by~\eqref{eq: c nonparametric}.
        }
        \label{fig: manhattan c}
    \end{subfigure}
    \begin{subfigure}[b]{0.24\linewidth}
        \centering
        \includegraphics[width=\linewidth]{./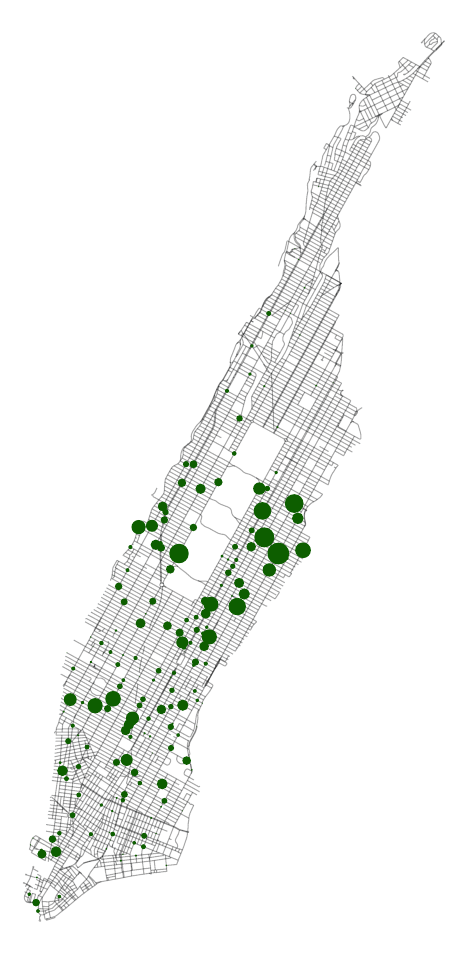}
        \caption{
        $J$ given by~\eqref{eq: fbar ordering}, \\ $c$ given by~\eqref{eq: c parametric}.
        }
        \label{fig: manhattan d}
    \end{subfigure}
    \caption{
    Graphical designs using up to $k=274$ nodes.
    The size of each node $i$ is proportional to its weight.
    Figure~\ref{fig: manhattan a}: Node $i$ is weighted by $\bar{f}(i)/n$.
    Figures~\ref{fig: manhattan b}-\ref{fig: manhattan d}: Node $i$ is weighted by $a_i^* \bar{f}(i)$, where $a^*$ is a basic optimal solution to Problem~\eqref{eq: lp c} with $J$ and $c$ as labeled in the corresponding figure.
    }
    \label{fig: manhattan}
\end{figure}

Figure~\ref{fig: validate} further shows the connection between graphical design accuracy and the choice of $J$ and $c$ in Problem~\eqref{eq: lp c}.
Given a graph function $f \in \RR^n$ and a graphical design $(S,a)$, the integration percent error is given by
\begin{equation}
\label{eq: percent error}
    \left|1 - \frac{\sum_{i \in S} a_i f(i)}{\frac{1}{n}\sum_{i \in [n]} f(i)} \right| \cdot 100.
\end{equation}
The figures show that the integration error tends to decrease as the sparsity parameter $k$ increases (i.e., as less sparse solutions are admitted).
However, access to a sample mean $\bar{f}$ enables high accuracy graphical design from early on (Figures~\ref{fig: validate b}-\ref{fig: validate c}).
In particular, if $\bar{f}$ is used to inform both the choice of both $J$ and $c$ in Problem~\eqref{eq: lp c}, a graphical design that uses under $5\%$ of the total number of nodes and consistently achieves under $5\%$ error is found.

\begin{figure}[ht]
    \centering
    \begin{subfigure}[b]{0.49\linewidth}
        \centering
        \includegraphics[width=\linewidth]{./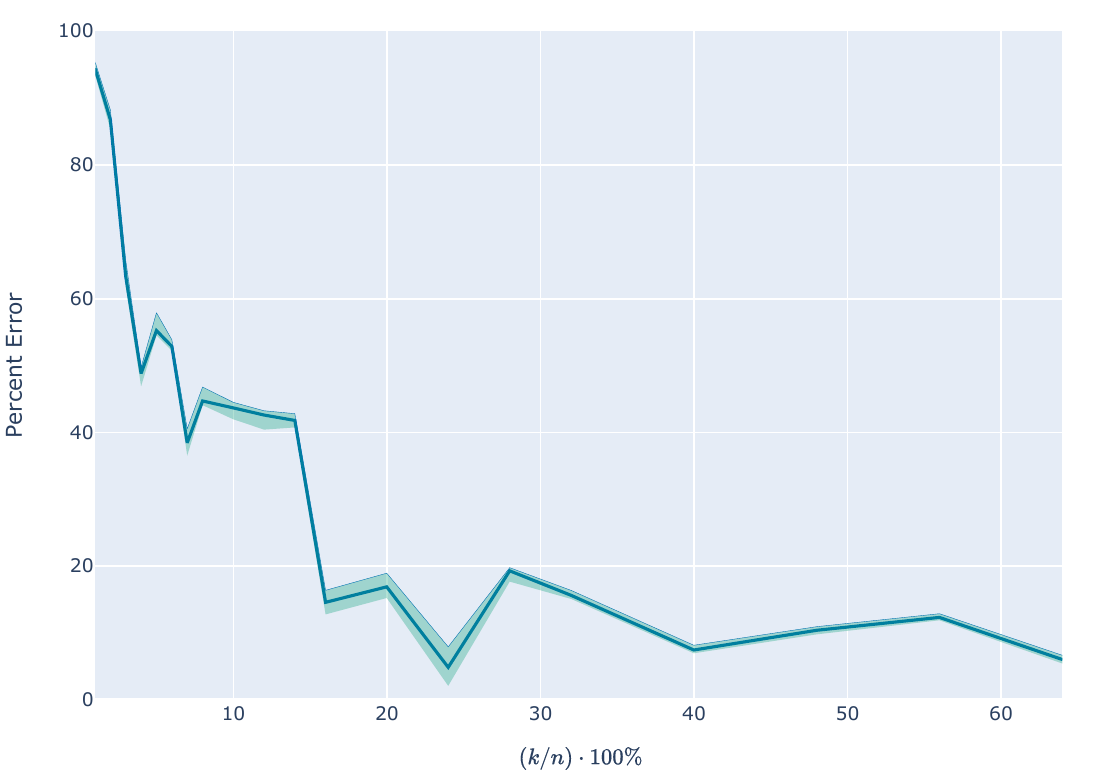}
        \caption{
        $J$ given by~\eqref{eq: freq ordering}, \\
        $c$ given by~\eqref{eq: c nonparametric}.
        }
        \label{fig: validate a}
    \end{subfigure}
    \begin{subfigure}[b]{0.49\linewidth}
        \centering
        \includegraphics[width=\linewidth]{./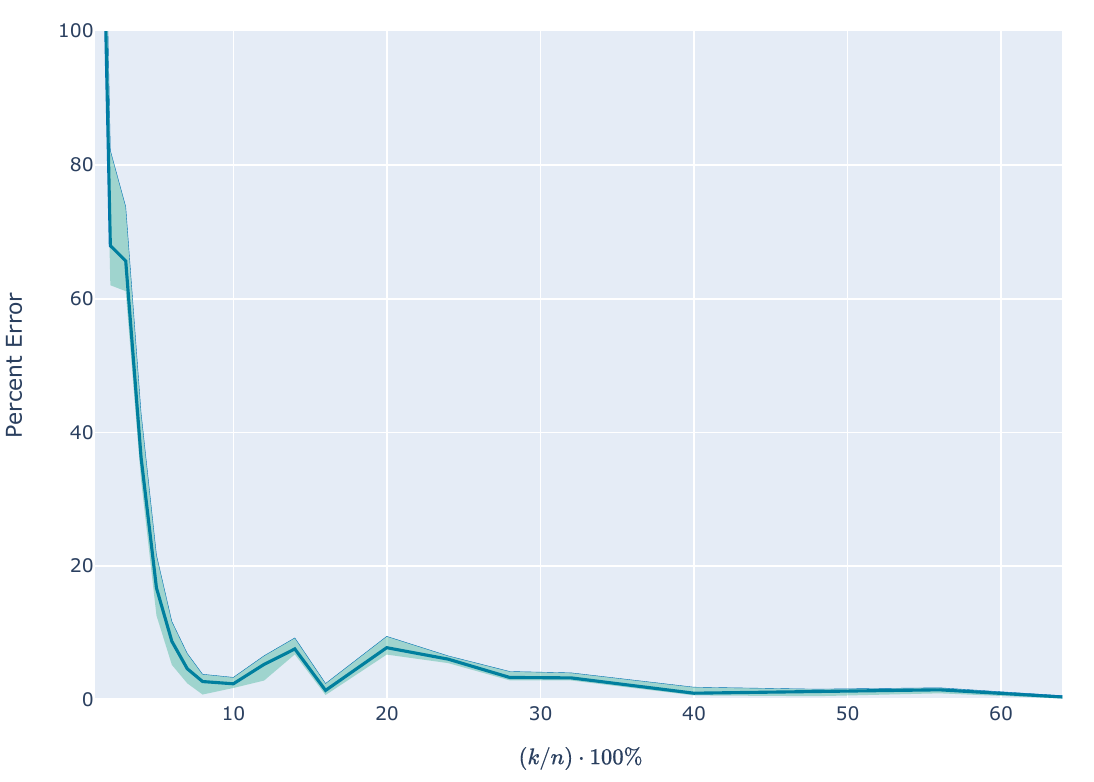}
        \caption{
        $J$ given by~\eqref{eq: fbar ordering}, \\
        $c$ given by~\eqref{eq: c nonparametric}.
        }
        \label{fig: validate b}
    \end{subfigure} \\ 
    \begin{subfigure}[b]{0.49\linewidth}
        \centering
        \includegraphics[width=\linewidth]{./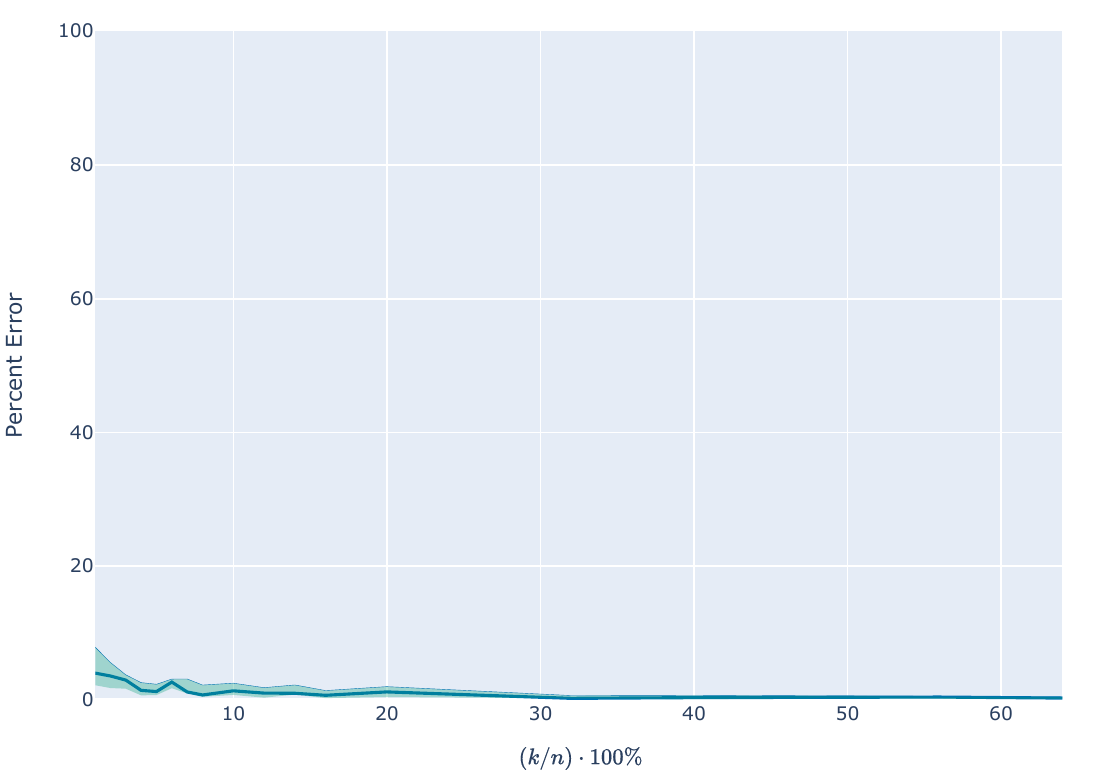}
        \caption{
        $J$ given by~\eqref{eq: fbar ordering}, \\
        $c$ given by~\eqref{eq: c parametric}.
        }
        \label{fig: validate c}
    \end{subfigure}
    \caption{
    Integration percent error as a function of sparsity parameter $k$ (shown as a percentage of the total number of nodes).
    In each panel, the vertical spread is over the $29$ functions $f^1, f^2, \ldots, f^{29}$ collected from the TLC data set: the solid line is corresponds to the median and the shaded region corresponds to the interquartile range.
    Figures~\ref{fig: validate a}-\ref{fig: validate c}:
    The percent error is computed using \eqref{eq: percent error} and a basic optimal solution $a^*$ to Problem~\eqref{eq: lp c} with $J$ and $c$ as labeled in the corresponding figure.
    }
    \label{fig: validate}
\end{figure}

\begin{remark}
Setting $c = \ones$ to be the all-ones vector reduces Problem~\eqref{eq: lp c} to finding any basic feasible solution (this follows from Constraint~\eqref{cnstr: lp phi_1}).
We report that, surprisingly, this approach leads to graphical designs whose performance tends to improve on Figure~\ref{fig: validate a} (but not on Figure~\ref{fig: validate b}-\ref{fig: validate c}).
In other words, in this particular implementation, the first basic feasible solution obtained by the solver tends to induce a graphical design of fair accuracy.
However, in principle, this approach could equally induce low accuracy graphical designs, given that they are interchangeable whenever $c$ is the all-ones vector.
\end{remark}

\section*{Acknowledgements}
This material is based upon work supported by the National Science Foundation under Grant No. DMS-1929284 while the authors were in residence at the Institute for Computational and Experimental Research in Mathematics in Providence, RI, during the \emph{Discrete Optimization: Mathematics,  Algorithms, and Computation} semester program.
The work of H. Al-Thani was made possible by the Graduate Sponsorship Research Award from the Qatar National Research Fund (a member of Qatar Foundation). The findings herein reflect the work, and are solely the responsibility, of the authors. 
J.~C. Mart\'inez Mori was partially supported by NSF Grant No.
2144127, awarded to S. Samaranayake. 
J.~C. Mart\'inez Mori is supported by Schmidt Science
Fellows, in partnership with the Rhodes Trust.

\printbibliography

\end{document}